\begin{document}

\parskip 1ex            % sets the gap between paragraphs

\parindent 5ex		% an ex is the width of an ex

\newcommand\mb{\mathbb}

\newcommand\mc{\mathcal}

\newcommand{\pbt}[1]{\textcolor{blue}{#1}}
\newcommand{\pbc}[1]{\textcolor{blue}{[{\bf PB}: {\em #1}]}}

\newtheorem{theorem}{Theorem}[section]

\newtheorem{lemma}[theorem]{Lemma}

\newtheorem{proposition}[theorem]{Proposition}

\newtheorem{corollary}[theorem]{Corollary}

\newtheorem{claim}[theorem]{Claim}

\newtheorem{conjecture}[theorem]{Conjecture}

\title{A natural barrier in random greedy hypergraph matching}

\author{
Patrick Bennett\thanks{Mathematics Department, Western Michigan 
University, Kalamazoo, MI 49008, USA. Email: {\tt patrick.bennett@wmich.edu}.
Research supported in part by NSF grant DMS-1001638 and Simons Foundation grant \#426894.}
\and
Tom Bohman\thanks{Department of Mathematical Sciences, Carnegie Mellon
University, Pittsburgh, PA 15213, USA. Email: {\tt tbohman@math.cmu.edu}.
Research supported in part by NSF grants DMS-1001638 and DMS-1100215.}
}
\date{}

\maketitle

\begin{abstract}

Let $r \ge 2$ be a fixed constant and let
$ {\mathcal H}$ be an $r$-uniform,
$D$-regular hypergraph on $N$ vertices.
Assume further that $ D \to \infty$ as $N \to \infty$
and that
degrees of pairs of vertices in ${\mathcal H}$ are
at most $L$ where $L \ = D/ (\log N)^{\omega(1)}$.
We consider the random greedy algorithm for forming a matching
in $ \mc{H}$.  We choose a matching at random
by iteratively
choosing edges uniformly at random to be in the matching and
deleting all edges that share at least one vertex with a
chosen edge before moving on to the next choice.
This process terminates when there are no edges remaining
in the graph.  We show that with high probability the
proportion of vertices of $ {\mathcal H}$ that are not
saturated by the final matching is at most
$ (L/D)^{ \frac{ 1}{ 2(r-1) } + o(1) } $.  This point is
a natural barrier in the analysis of the random greedy hypergraph
matching process.
\end{abstract}

\section{Introduction}

Let $r \ge 2 $ be a fixed constant and let
$ {\mathcal H}$ be an $r$-uniform,
$D$-regular hypergraph on vertex set $V$ where  $|V|=N$ and
$ D \to \infty$ as $N \to \infty$.  We study the
evolution of the random greedy matching algorithm on ${\mathcal H}$.
This process forms a matching (i.e. a collection of pairwise disjoint
edges) in $ {\mathcal H}$ by making a series of random choices.  We begin
with $\mc{M}(0) = \emptyset$, $ \mc{H}(0) = \mc{H}$ and $V(0) = V$.  In iteration $i$ an edge $E_i$ is chosen
uniformly at random from $ \mc{H}(i-1) $ and added to $ \mc{M}(i-1) $ to form the matching
$ \mc{M}(i) $.  We then form $ \mc{H}(i) $ by setting $V(i) = V(i-1) \setminus E_i$ and deleting from $ \mc{H}(i-1) $
all edges that intersect $ E_i$.  The process proceeds until the step $M$ where
$ {\mathcal H}(M) $ is
empty.  We are interested in the likely value
of $M$; that is, we are interested in the number of
edges in the matching produced by the random greedy process.

The random greedy packing algorithm for producing a partial Steiner system
is an important special case of this process.  Let $ 1 < \ell < k $ be
fixed integers.  Define $ \mc{H}_{\ell,k} $ to
be the hypergraph on vertex set $ \binom{[n]}{\ell} $ with edge set consisting
of all sets of the form $ \binom{A}{\ell} $ where $ A \in \binom{[n]}{k} $.  Note
that a matching in $ {\mathcal H}_{\ell,k} $ corresponds to a collection
of $k$-element subsets of $[n]$ with the property that the intersect of any
pair of sets in the collection has cardinality less than $ \ell$; that is,
a matching in $ {\mathcal H}_{\ell,k} $ gives a partial $ (n,k,\ell)$-Steiner system.
The random greedy matching algorithm applied to $ {\mathcal H}_{\ell,k} $ is also
known as random greedy packing.  This process is related to the celebrated
R\"odl nibble \cite{rodl}, which is a semi-random variation
on random greedy packing.  The R\"odl nibble was introduced in the solution of
the Erd\H{o}s and Hanani conjecture \cite{erdos}, which states that
for every fixed $\ell,k$ there is a matching in $ {\mathcal H}_{\ell,k} $ that
saturates $ (1-o(1)) \binom{n}{\ell} $ vertices.

In this paper
we study the general random greedy matching algorithm
by establishing dynamic concentration of the number of edges and
the vertex degrees in the remaining hypergraph $ \mc{H}(i) $.  Let $ Q(i) $ be the
number of edges in $ \mc{H}(i) $ and let $ d_v(i) $ be the degree of vertex $v$
in $ \mc{H}(i)$.  We aim to show that $ Q(i) $ and $d_v(i)$, appropriately
scaled, are tightly concentrated around expected trajectories that we express
as smooth functions on the reals.  In order to describe the trajectories we introduce
a continuous time $t$ which we relate to the steps of the process by setting
\[ t = t(i) = \frac{i}{N}. \]
Our study is guided by the following probabilistic intuition:
we suspect that $\mc{H}(i)$ resembles a subhypergraph of $ \mc{H} $ chosen
uniformly at random from the collection of all subhypergraphs induced by
$ N- ir$ vertices.  So we anticipate that $ \mc{H}(i)$ resembles
a subhypergraph of $ \mc{H}$ induced by a random subset of the vertices where
each vertex is included independently with probability
$$p = 1 - ir/N = 1 - rt. $$
(Note that this probability can be viewed as either a function of either $i$ or
$t$; we pass between these interpretations without comment.)
It follows from this assumption that the probability an edge $E \in \mc{H} $
is in $ \mc{H}(i) $ should be about $p^r$, and therefore we ought to have
\begin{equation}
\label{eq:Qloose}
Q(i) \approx |\mc{H}| p^r = ND p^r/r.
\end{equation}
Furthermore,
if a vertex $v$ is not saturated by
$ \mc{M}(i) $ then we should have
\begin{equation}
\label{eq:dloose}
d_v(i) \approx D p^{r-1}.
\end{equation}
Our main result (see Theorem~\ref{thm:main} below) is that
estimates (\ref{eq:Qloose}) and (\ref{eq:dloose})
hold for most of the evolution of the process.  This is
a generalization of a result of Bohman, Frieze
and Lubetzky \cite{bfl}, who proved an analogous
result for the special case
of $ \mc{H}_{2,3}$.

%of the process.

%\begin{theorem}

%\label{thm:main}

%With high probability we have

%\begin{align*}

%Q(i)  &= \frac{ND p^r }{r} (1+o(1)) \ \ \   \text{ and } \\

%d_v(i) &= Dp^{r-1}( 1+o(1)) \ \ \ \text{ for all } v \in V(i)

%\end{align*}

%for

%$$ i \le  \frac{N}{r} \left( 1 - (L/D)^{ \frac{1}{2 (r-1)} + o(1)} \right). $$

%\end{theorem}

%\noindent

%A more precise statement, which includes explicit error bounds is

%given in the next section.

In order to discuss our main result in more detail, we define
the random variable $$X=X(\mc{H}) := 1 - M r/ N$$
where $M$ is the number of steps before the random greedy matching algorithm on
$ {\mathcal H}$ terminates.  In other words, $X$ is the proportion of vertices
left unsaturated by the matching produced by the random greedy algorithm.
The following bound is
a Corollary of Theorem~\ref{thm:main}.
\begin{theorem}
\label{thm:X}
Let $r \ge 2$ and 
$ {\mathcal H}$ be an $r$-uniform,
$D$-regular hypergraph on $N$ vertices.
If the maximum
degree $L$ of a pair of vertices in ${\mathcal H}$ satisfies
$L = D/ ( \log N)^{ \omega(1)} $ and $X(\mc{H})$ is the proportion of vertices that are
not saturated by the matching produced by the random greedy algorithm then
with high probability we have
\[ X( \mc{H}) \le \left( \frac{L}{D} \right)^{ \frac{1}{2 (r-1)} + o(1)}. \]
\end{theorem}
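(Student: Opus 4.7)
The plan is to derive Theorem~\ref{thm:X} directly from Theorem~\ref{thm:main} by running the random greedy process up to a carefully chosen stopping time and observing that $X$ is bounded above by the unsaturated fraction $p = 1 - rt$ at that point. Theorem~\ref{thm:main} establishes dynamic concentration of $Q(i)$ and $d_v(i)$ around the trajectories $NDp^r/r$ and $Dp^{r-1}$ (respectively) throughout the evolution of the process, with high probability, as long as the error terms accumulated by tracking have not caught up to the main terms. The natural barrier $(L/D)^{1/(2(r-1))}$ is precisely the value of $p$ at which this tracking argument ceases to yield useful information, so the stopping time should be set to fall just inside the regime of validity.

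More concretely, I would choose $i^\ast$ so that
\[
p^\ast := 1 - ri^\ast/N = (L/D)^{1/(2(r-1)) + o(1)},
\]
where the $o(1)$ correction (a suitable power of $\log N$) is taken so that the conclusion of Theorem~\ref{thm:main} applies throughout $0 \le i \le i^\ast$. With high probability this gives
\[
Q(i^\ast) \ge (1-o(1))\, ND (p^\ast)^r / r > 0,
\]
so the process does not terminate before step $i^\ast$. Hence at least $i^\ast$ edges are placed in the final matching, saturating exactly $r i^\ast$ vertices; the proportion of unsaturated vertices at that moment is therefore at most $1 - r i^\ast/N = p^\ast$, and since the process only saturates further vertices after step $i^\ast$, the final value satisfies $X \le p^\ast = (L/D)^{1/(2(r-1)) + o(1)}$, as claimed.

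Essentially all of the real work is concentrated in Theorem~\ref{thm:main} rather than in this corollary. For the corollary itself the only subtlety is calibrating the $o(1)$ correction in the exponent so that the stopping time falls on the correct side of the barrier—that is, so that the error term supplied by Theorem~\ref{thm:main}, which presumably scales like a polynomial in $\log N$ times a factor that degrades as $p$ decreases, remains strictly dominated by the main term $ND(p^\ast)^r/r$. Once the quantitative form of Theorem~\ref{thm:main} is in hand, this calibration is a routine comparison between the two quantities, and no further probabilistic argument is needed.
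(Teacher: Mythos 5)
Your derivation is correct and is precisely the argument the paper leaves implicit when it labels Theorem~\ref{thm:X} as a corollary of Theorem~\ref{thm:main}: pick a deterministic step $i^\ast$ with $p^\ast = 1 - ri^\ast/N$ a sufficiently large constant multiple of the bound $(L/D)^{1/(2(r-1))}\log^{5/(2(r-1))}N$, invoke Theorem~\ref{thm:main} to conclude $i^\ast < T$ whp, use the definition of $T$ to get $Q(i^\ast) \ge \tfrac{ND}{r}(p^\ast)^r - e_q > 0$, and then monotonicity of the matching gives $X \le p^\ast$. The one thing worth spelling out (which you gesture at but don't verify) is that $e_q = o\!\left(\tfrac{ND}{r}(p^\ast)^r\right)$ at this value of $p^\ast$; a short computation shows the ratio is $O\!\left((1 - r\log p^\ast)^2/\log^4 N\right) = O(\log^{-2}N)$, so the check goes through. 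This is the same route the paper intends, and no further probabilistic input is needed beyond Theorem~\ref{thm:main}.
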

\noindent
Previous analyses of the random greedy matching algorithm
due to Spencer \cite{joel} and, independently,
R\"odl and Thoma \cite{rodl} showed that if $ L = o(D) $ then
we have $ X( \mc{H}) = o(1) $ with high probability.  Note that this
result applied to the
hypergraph $ \mc{H}_{\ell,k} $ gives an alternate proof
of the Erd\H{o}s--Hanani conjecture.
Wormald \cite{nick2} applied the differential
equations method for random graph processes to show that if $ \mc{H}$ is
an $r$-uniform, $D$-regular hypergraph on $N$ vertices such that
$ D = o(N)$ but
$D \to \infty$ sufficiently quickly as $N \to \infty$ then
$ X(\mc{H}) < D^{- \frac{1}{ 9r(r-1) + 3} + o(1)} $ with high probability.

Theorem~\ref{thm:X} takes the analysis of random greedy matching
up to a natural
barrier.  To describe this barrier we
assume estimates (\ref{eq:Qloose}) and (\ref{eq:dloose}) hold.
For a fixed vertex $v$ let $L_v$ be the set of vertices $u$ such that the
degree of $ \{u,v\}$ in $ \mc{H}$ is $L$.  Note that $|L_v|$
can be as large as
$ (r-1)D/L $.  Now early in the process
(when $p=1/2$, say) the expected number of vertices in $L_v$ that
are not saturated by $ \mc{M}$ can be as large $ p D/L $ and thus
can have variation as large as $ \sqrt{ D/L } $, roughly speaking.  This
yields variations
in vertex degrees that are as large as $ \sqrt{ D/L } \cdot L = \sqrt{ DL} $.
If these early variations in vertex degree persist then at the point when
$ Dp^{r-1} = \sqrt{DL} $ these variations will be as large as the expected
degree itself.  So, if these variations indeed persist then when we
reach this point vertex degrees could be
zero even though the expected vertex
degree is large.  Note that this is point where Theorem~\ref{thm:main} no
longer holds.  One would expect that in order to prove better bounds one would have to
show that the variations in vertex degree {\em decrease} as the process evolves.

But where do we expect the random greedy matching algorithm to
finally terminate?  If we assume that
estimates (\ref{eq:Qloose}) and (\ref{eq:dloose}) hold all the way
to termination then
when $ ND p^r = Np $ the number of unsaturated
vertices should be roughly the
same as the number of remaining edges.  At this stage a positive
proportion of the unsaturated vertices should be in no remaining edges;
these vertices would remain unsaturated to termination.  Thus, it is
natural to guess that random greedy matching terminates when the proportion of
unsaturated vertices is
roughly $ D^{-1/(r-1)} $.  (We note in passing that this line of reasoning
is suspect if $ L > D^{1 - \frac{1}{r-1}} $.  In this case, one suspects that
we will reach a point where degrees of pairs of vertices in $\mc{H}(i)$ are larger
than degrees of individual vertices before the supposed termination point.)  In the
context of random greedy packing, this line of reasoning leads to the
following conjecture.
\begin{conjecture}[folklore] Let $ 1<\ell<k $ be fixed.  With high probability \label{conj:gen}
$$ X( \mc{H}_{\ell,k}) = n^{ {-\frac{k-\ell}{{k \choose \ell}-1}} +o(1)}.$$
\end{conjecture}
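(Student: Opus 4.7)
The plan is to push the dynamic concentration analysis of Theorem~\ref{thm:main} all the way down to the heuristic termination scale $p \asymp D^{-1/(r-1)} = n^{-(k-\ell)/(\binom{k}{\ell}-1)}$, and then show that at this scale a positive fraction of the remaining $\ell$-sets are already isolated in $\mc{H}(i)$ and so persist unsaturated to the end of the process. Since $\mc{H}_{\ell,k}$ is vertex-transitive and has a rigid co-degree structure---the co-degree of two $\ell$-sets depends only on the size of their intersection---this symmetric setting is arguably the most favorable one in which to try to beat the generic $(L/D)^{1/(2(r-1))}$ barrier of Theorem~\ref{thm:X}.

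First, I would introduce a refined family of tracking variables beyond $Q(i)$ and $d_v(i)$: for each intersection size $j \in \{0,1,\dots,\ell-1\}$ and each pair of $\ell$-sets $(S,T)$ with $|S \cap T|=j$, the number of edges of $\mc{H}(i)$ through $S$ but not through $T$, the number through both, and suitable joint second-neighborhood statistics. Each such variable should have an expected trajectory dictated by the random-subset heuristic $p = 1-rt$, and the martingale machinery of \cite{bfl} (Freedman's inequality applied to the one-step changes) should track it inside a slowly expanding critical window. The point of carrying these auxiliary trajectories is to feed improved bounds on $\mathrm{Var}(d_v(i))$---and in particular on signed covariances $\mathrm{Cov}(d_v(i),d_u(i))$ for $u\in L_v$---back into the main martingale for $d_v$ itself; this is the mechanism by which one hopes to beat the $\sqrt{DL}$ variation that stops Theorem~\ref{thm:X}.

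Second, once concentration has been extended to $p = n^{-(k-\ell)/(\binom{k}{\ell}-1)+\varepsilon}$, a second-moment calculation should control the number of $\ell$-sets isolated in $\mc{H}(i)$: conditional on the pseudorandomness estimates from the previous step, this count concentrates around $Np(1-p^{r-1})^{D}$, which is $\Theta(Np)$ precisely when $p = D^{-1/(r-1)}$. Isolated $\ell$-sets cannot be covered by any subsequent step, which yields the matching lower bound $X \ge n^{-(k-\ell)/(\binom{k}{\ell}-1)-o(1)}$; combining with the extended upper bound gives the conjecture.

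The main obstacle is precisely the natural barrier described in the paragraph preceding the conjecture. The variation in $d_v(i)$ picked up through vertices of $L_v$ is, a priori, of order $\sqrt{DL}$, which matches the mean degree $Dp^{r-1}$ exactly when $p=(L/D)^{1/(2(r-1))}$. Pushing past this point requires proving that the degree fluctuations \emph{contract} as the process evolves rather than behaving like a generic sum-of-independent-increments martingale, and this self-correction is not something the differential-equations method delivers automatically: one must exhibit negative drift in the auxiliary covariance trajectories described above and show that this drift dominates the martingale noise all the way down to the conjectured scale. I expect this to be the crux of the difficulty, and it is quite possible that genuinely breaking the barrier requires an entirely different idea, such as a coupling of $\mc{M}(i)$ with a uniformly random near-perfect matching of a carefully truncated subhypergraph of $\mc{H}_{\ell,k}$, or a switching/entropy argument that replaces step-by-step martingale tracking altogether.
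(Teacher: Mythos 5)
What you are attempting to prove is a \emph{conjecture}: the paper does not contain a proof, and the statement is open for every case except $\ell = 2$, $k = 3$, which was resolved in \cite{bfl2}. Your proposal is therefore best read as a research plan rather than a proof, and you are honest about this in your closing paragraph. At the level of strategy, what you describe does match what \cite{bfl2} does for $\mc{H}_{2,3}$: introduce a large, carefully selected ensemble of auxiliary variables (co-degree and joint-neighborhood statistics stratified by intersection size), track them with the critical-interval method, and use them to feed back tighter, \emph{shrinking} error windows for the primary variables. So at least the shape of the program is right for the one case that is known.

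However, the decisive step --- demonstrating that the degree fluctuations actually contract rather than accumulate as a generic martingale would, i.e.\ exhibiting negative drift in the covariance trajectories and showing it dominates the noise all the way down to $p \asymp D^{-1/(r-1)}$ --- is precisely what you flag as the crux of the difficulty without producing an argument for it. That omission is not cosmetic: the entire content of the conjecture, over and above Theorem~\ref{thm:X}, lives in crossing that barrier, and no general method for doing so for arbitrary $(\ell,k)$ is known. Likewise, your lower-bound sketch, that the number of isolated $\ell$-sets concentrates near $Np(1-p^{r-1})^D$ and hence a positive fraction of surviving $\ell$-sets are already stranded at $p \asymp D^{-1/(r-1)}$, is conditioned on pseudorandomness at a scale you have not established, and is stated only heuristically: the survival events for the $D$ edges through a fixed $\ell$-set are far from independent (they share vertices), so both the first- and second-moment calculations need genuine work. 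In short there is a real gap --- your proposal accurately diagnoses the obstacle but does not overcome it --- and since the statement is an open conjecture, there is no proof in the paper to compare your attempt against.
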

\noindent
The $\ell=2, k=3$ case of this conjecture was recently proved by Bohman, Frieze
and Lubetzky \cite{bfl2} who
establish estimates for vertex degrees in
$ \mc{H}_{2,3}(i) $ with error bounds that decrease as the process evolves.  These
self-correcting estimates are proved using the critical interval method that is featured
in this paper and was introduced in \cite{bfl}.  It should be noted that
the sharp result given in \cite{bfl2} 
requires a large, carefully selected 
ensemble of random variables.

The related problem of proving the existence of a large matching
in an $r$-uniform, $D$-regular hypergraph $ \mc{H} $ has been widely studied
(see \cite{pip} \cite{aks} \cite{kr}).  The best known results are
due to Vu \cite{van} who used a semi-random (i.e. R\"odl nibble type)
method to show that
%if $ \mc{H}$ is and $r$-uniform, $D$-regular hypergraph
%on $N$ vertices where $ D \to \infty$ as $N \to \infty$ then
there
exists a matching in $ \mc{H}$ that saturates all but at most
$$ \left( \frac{L}{D} \right)^{ \frac{1}{ r-1}+ o(1)} $$
vertices where $L$ is the maximum degree of pairs of
vertices in $\mc{H}$.  Vu obtained stronger results when one adds
degree assumptions for larger sets of vertices.
%\begin{gather*}

%1 - \frac{S(n,r,k) \binom{k}{r}}{ \binom{n}{r}} \leq

%n^{{-\frac{1}{{k \choose r}-1} +o(1)}}, \text{ and } \\

%%k \geq r+3 \ \ \ \Rightarrow \ \ \

%1 - \frac{S(n,r,k) \binom{k}{r}}{ \binom{n}{r}} \leq

%n^{{-\frac{k-r}{3 \left({k \choose r}-1 \right)} +o(1)}}.

%\end{gather*}

The remainder of this paper is organized as follows.  In the next Section we
give a precise statement of our dynamic concentration result.  The
proof follows in Section~\ref{sec:proof}.  This proof uses the critical interval
method introduced by Bohman, Frieze and Lubetzky in \cite{bfl}, where they
prove Theorem~\ref{thm:X} for
the special case $ \mc{H}_{2,3}$.  In this note we show that
the techniques introduced in \cite{bfl} are robust enough to 
handle the general case (with the introduction of some delicate
calculations necessitated by the large pairwise degrees).

\section{Dynamic Concentration}

Throughout this section we assume that $ {\mathcal H}$ is an $r$-uniform,
$D$-regular hypergraph on $N$ vertices where $r$ is a fixed constant and
$ D \to \infty$ as $N \to \infty$.  We also assume that the maximum
degree $L$ of a pair of vertices in ${\mathcal H}$ satisfies
$L = o(D/ \log^5 N)$ .
%\pbt{$L = D/ \log^{\omega(1)} N $}. \pbc{This still said before I changed it.}

In order to make the estimates (\ref{eq:Qloose}) and
(\ref{eq:dloose}) precise
we introduce error bounds for $Q$ and $ d_v $.
Define
\begin{align*}
e_q & = 90r^2 N L p^{2-r} \log N  \left( 1 - r \log p \right)^2 \\
e_d & = \sqrt{ 6r LD \log N} \left(1 - r \log p \right)
\end{align*}
Further define the stopping time $T$ to be
the first step $i$ such that
\begin{align*}
\left|Q(i) - \frac{ND}{r} p^r \right| & > e_q, \text{ or} \\
|d_v(i) - Dp^{r-1}| & > e_d \text{ for some } v \in V(i)
\end{align*}
\begin{theorem}
\label{thm:main}
With high probability we have
$$ N - Tr = O \left( N \cdot \left( \frac{L}{D} \right)^{ \frac{1}{ 2(r-1)}} \log^{ \frac{5}{ 2(r-1)}} N \right). $$
\end{theorem}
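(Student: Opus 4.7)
The plan is to apply the critical interval method of Bohman, Frieze and Lubetzky~\cite{bfl}. For each tracked quantity, introduce a pair of error random variables whose positivity triggers the stopping condition of $T$:
\[
Q^{\pm}(i) := \pm\bigl(Q(i) - \tfrac{ND}{r}p^r\bigr) - e_q(i),\qquad
D^{\pm}_v(i) := \pm\bigl(d_v(i) - Dp^{r-1}\bigr) - e_d(i).
\]
Then $T$ is the first $i$ at which one of these $2+2N$ variables becomes positive, and Theorem~\ref{thm:main} reduces, via a union bound, to showing that each of them stays non-positive, with probability $1-o(1/N)$, up to the step $i^{\ast}$ at which $p(i^{\ast}) = \Theta\bigl((L/D)^{1/(2(r-1))}\log^{5/(2(r-1))} N\bigr)$.

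The first main step is to compute the one-step conditional expectations of $\Delta Q$ and $\Delta d_v$ given $\mathcal{F}_{i-1}$ on the event $i-1 < T$. A uniformly chosen edge $E_i \in \mathcal{H}(i-1)$ deletes itself together with every other edge meeting it; expanding this count by inclusion-exclusion over the vertices of $E_i$, and using $d_u(i-1) = Dp^{r-1} + O(e_d)$ together with the co-degree bound $L$, yields
\[
\mathbb{E}[\Delta Q \mid \mathcal{F}_{i-1}] = -rDp^{r-1} + \text{lower-order error},
\]
which matches $\tfrac{d}{di}\bigl(\tfrac{ND}{r}p^r\bigr) = -rDp^{r-1}$. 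A parallel but more delicate calculation for $\Delta d_v$ conditions on whether $v \in E_i$; in the typical case $v \notin E_i$, the drop in $d_v$ equals the number of edges $e \ni v$ meeting $E_i$, which averages to $\tfrac{r(r-1)}{N}Dp^{r-2}$, in agreement with $\tfrac{d}{di}(Dp^{r-1})$. The error terms in both computations are controlled by combinations of $L$, $e_d/D$, and powers of $p$.

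The second step is to verify that the error functions $e_q$ and $e_d$ chosen in the statement of Theorem~\ref{thm:main} absorb the residual drift: for each error variable, the growth rate of $e$ dominates the leftover drift, so that on the critical interval $[(1-\eta)e(i),\,e(i)]$ with some $\eta = \eta(N) \to 0$, the error variable has \emph{negative} one-step drift. The one-step jumps satisfy $|\Delta Q| = O(rDp^{r-1})$ and $|\Delta d_v| \le rL$, which are both much smaller than $\eta \cdot e$ in the relevant range, so the error variable cannot leap over the critical interval in a single step. Freedman's martingale inequality, applied to the supermartingale obtained by stopping the error variable upon exit from the critical interval, with step bound $O(rL)$ and quadratic variation $O(rNL^2 p^{r-1})$ from a one-step second-moment calculation, then delivers the required $1-o(1/N)$ tail bound.

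The main obstacle is the drift calculation for $d_v$. ``Partner'' vertices $u$ with $\mathrm{codeg}(u,v) = \Theta(L)$ can knock $d_v$ down by as much as $rL$ in a single step, and handling their aggregate contribution precisely enough to extract the main term $-r(r-1)Dp^{r-2}/N$ requires care; this is where the hypothesis $L = o(D/\log^5 N)$ is exploited, and where the $\sqrt{LD\log N}$ factor in $e_d$ arises from the variance of high-co-degree drops. Verifying that this choice of $e_d$ produces negative drift inside the critical interval all the way down to the barrier $p^{r-1} \asymp \sqrt{(L/D)\log N}$, where $e_d$ catches up with the trajectory $Dp^{r-1}$ itself, is the delicate computation referenced in the introduction, and is exactly what prevents the method from extending past this natural barrier.
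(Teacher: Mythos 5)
Your overall plan — track $Q$ and all $d_v$, set up error functions $e_q, e_d$ that define the stopping time $T$, compute one-step drifts, exploit the self-correcting term inside a critical interval, and close with a martingale tail bound and a union bound — is exactly the method the paper uses. The differences are mostly in the choice of concentration inequality and in one numerical estimate, and one of those differences is a real problem.

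The paper does not use Freedman's inequality. It uses a bespoke asymmetric Azuma-type bound for supermartingales whose one-step increments satisfy $-\Theta \le \Delta X \le \theta$ with $\theta \ll \Theta$: here $\theta = \tfrac{D(r-1)}{N}(1+o(1))$ (the drift of the deterministic trajectory, which caps the upward jump of $d^+_{v,j}$) and $\Theta = rL(1+o(1))$ (the worst-case downward jump from a high-codegree neighbor of $v$). The effective variance proxy is then $\theta\Theta m \approx LDp$, which yields the desired $o(N^{-2})$ tail. Freedman's inequality is a legitimate alternative, and with a careful second-moment computation gives a predictable quadratic variation of order $LDp^{r-1}$ (the per-step second moment of $\Delta d_v$ is $O(LDp^{r-2}/N)$ once you use the degree bounds and $\sum_u \mathrm{codeg}(u,v) = (r-1)d_v$), which is at least as good. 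However, your stated quadratic variation $O(rNL^2p^{r-1})$ is too large by a factor on the order of $NL/D$, which can be polynomially large in $N$ (only $D<NL$ is guaranteed, and $D$ may be as small as $\log^{5+\epsilon}N$). With that value the Freedman exponent is $\Theta\bigl(\tfrac{D\log N}{NLp^{r-1}}\bigr)$, which can be $\ll \log N$ near $p=1$ and would not survive the $O(N^2)$-fold union bound. This is the one place your outline would actually fail as written; the fix is to bound $\mathbb{E}[(\Delta d_v)^2\mid\mathcal{F}_i]$ by $(\text{max jump}) \cdot \mathbb{E}[|\Delta d_v|\mid\mathcal{F}_i] = O\bigl(L\cdot\tfrac{Dp^{r-2}}{N}\bigr)$ rather than by $(\text{max jump})^2 \cdot \Pr[\Delta d_v \ne 0]$.

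Two smaller points. First, the union bound is not just over the $2+2N$ tracked variables: for each variable one must union over all $O(N)$ possible entry steps $j$ into the critical interval, since the supermartingale property only holds while the variable is inside the interval and must be restarted at each entry; the paper makes this explicit with the family $d^+_{v,j}$, $Q^+_j$ indexed by $j$. Second, your critical interval $[(1-\eta)e,e]$ with a single $\eta=\eta(N)\to 0$ doesn't quite match the paper's choice: there $f_d$ is constant in $t$ while $e_d$ grows like $1-r\log p$, so the relative width $f_d/e_d$ shrinks as $p\to 0$ but equals $1$ at $p=1$. This matters because the interval width must be calibrated against the differential inequality $e_d' > \tfrac{r(r-1)}{p}f_d$ that makes the self-correction dominate.
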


\section{Proof}

\label{sec:proof}

We begin with a brief overview of the critical interval method, which is a refinement of the
differential equations method for proving dynamic concentration.  In a standard application of the
differential equations method, we have a sequence of random variables $  Z(0), Z(1), \dots $ that is
determined by some combinatorial random process on $n$ points, and our
%Let $ \mc{F}_i $ be the natural
%filtration of the underlying probability space.  If our sequence of random variables satisfies 
%a condition of the form 
%\[ E \left[ Z(i+1) - Z(i) \mid \mc{F}_i \right] \approx g\left( \frac{Z(i)}{n} , \frac{i}{n} \right) \]
%where $g$ is a well-behaved (e.g. smooth) function on $ \mb{R}^2$ then we might be able to 
dynamic concentration statement is
\begin{equation}
\label{eq:idealized}
 Z(i)= z(i/n) \pm e_z(i/n) \ \ \ \ \text{ for } \ \ \ \ i = 0, 1, \dots, M(n) 
\end{equation}
with high probability.  Note that we use the symbol ``$\pm$" in two distinct ways: sometimes we write $a= b \pm c$ meaning that $a$ is in the interval $[b-c, b+c]$ whereas other times we simply use ``$\pm$" as a symbol that could either be ``$+$" or ``$-$." The meaning should be clear from context.  The deterministic trajectory function $ z$ is usually determined by the one-step
expected changes in $ Z(i)$ and the initial condition $ z(0) = Z(0)$. 
The error function $ e_z $ is a carefully chosen, slowly growing, function.  It is often convenient to introduce a continuous time variable $t$ that we relate to the
steps of the process by setting $ t = t(i) = i/n $.  This allows us to view the
function $ z(t)$ as a scaling limit for the sequence $ Z(i) $.  
%(We note in passing
%that this discussion assumes that both time and the random variable itself scale linearly with $n$.  But other scalings are possible.  Indeed, 
%the random variables considered in this
%paper do not generally scale linearly while time does scale linearly.)  

In a standard application of the differential equations method we prove the
dynamic concentration statement (\ref{eq:idealized}) by two applications of a martingale deviation
inequality.  We introduce 
a stopping time $T$, which is defined to be the minimum of $M = M(n)$ and the first step $ i $  at which (\ref{eq:idealized}) fails.  We then
define the two sequences of random variables $\mc{D}Z^{+}(i), \mc{D}Z^{-}(i)$ as follows:
\[ \mc{D}Z^{\pm}(i) = Z(i \wedge T) - z( t \wedge (T/n)) \pm e_z( t \wedge (T/n )). \]
Note that violation of the upper bound in (\ref{eq:idealized}) is equivalent to $ \mc{D}Z^-(T) = \mc{D}Z^-(M) > 0 $ and violation of
the lower bound in (\ref{eq:idealized}) is equivalent to $ \mc{D}Z^+(T) = \mc{D}Z^+(M) < 0 $.  Note further that $ \mc{D}Z^-(0) = -e_z(0) $
and $ \mc{D}Z^+(0) = e_z(0) $.  If $ \mc{D}Z^- $ is a supermartingale and $ \mc{D}Z^+ $ is a submartingale, then violation of
(\ref{eq:idealized}) is contained in the event that one of these martingales has a large deviation.
We choose the error functon $e_z(t)$ so that $ \mc{D}Z^- $ is a supermartingale and $ \mc{D}Z^+ $ is
a submartingale and $ e_z(0) $ is sufficiently large to make the probabilities of these martingale
deviations small.  We emphasize that the introduction of this stopping time $T$ is an 
important detail in the proof as it allows us to assume the bounds in (\ref{eq:idealized}) when 
we establish the martingale condition and apply
the martingale inequality.

Our proof of Theorem~\ref{thm:main} requires even greater control over the random variable $Z$ when
we are establishing the martingale condition.  This is what the critical interval method provides.
For each variable $Z$ treated by Theorem~\ref{thm:main} and each bound (i.e. upper and lower) we 
introduce a {\em critical interval} $ I_Z(t) = \left[a_Z,b_Z \right] $ which has one end at the bound we
are trying to establish and the other end slightly closer to the trajectory $z(t)$.
The upper critical interval is
$$ I_Z(t) = \left[ z(t) + e_z(t) - f_z(t), z(t) + e_z(t) \right] $$
where 
%$s_Z$ is the scaling of the random variable $Z$ and 
the width $ f_z(t) $ will be chosen below.
%(The scaling for $ d_v$ is simply $D$ and the scaling for $Q$ is $ ND/r$ .)
Simillarly, the lower critical interval is
$$ I_Z(t) = \left[ z(t) - e_z(t) , z(t) - e_z(t) + f_z(t) \right] $$
We can view violation of the dynamic concentration statement given
by Theorem~\ref{thm:main} as the event that some variable manages to cross one of its critical intervals.
In order to bound the probability of this event we consider a large collection of martingales.  We have one such
martingale for each variable, each bound (upper and lower), and each step of the process that the
random variable in question might enter the critical interval for the last time before crossing the interval.

Consider a random variable $Z$ in the collection of random variables treated by Theorem~\ref{thm:main}, some step $j$ of the process, and
the upper bound on $Z$.  We introduce a stopping time that is specialized to the event that variable $Z$ enters
its upper critical interval at step $j$ and proceeds to cross the interval without leaving it.
Define $ T_{Z,j}$ to
be the minimum of the global stopping time $T$ (which is defined in
Section~2 above) and the first step $i \ge j$ when $ Z(i)$ is not in its upper critical interval.  We simply
have $ T_{Z,j}= j $ if $Z(j)$ is not in the upper critical interval.  We consider the sequence of random
variables 
$$ \mc{D}Z_j^-(i) = Z( i \wedge T_{Z,j}) - z(t \wedge ( T_{Z,j}/N ))  - e_z( t \wedge ( T_{Z,j}/N )) \ \ \  \text{ for } \  i=j, \dots $$
Now, assuming that we have a suitable bound on the one step
changes in each variable $Z$, the event $ T = i$ and $ Z(i) > z(t) + e_z(t) $ is contained in the event that there exists 
a $ j<i $ such that $ \mc{D} Z_j^- (j) \approx - f_z(j/N) $ and $\mc{D} Z_j^-(i) > 0 $.  If $ \mc{D} Z_j^-$ is a supermartingale then each such event
is the event that this martingale has a large deviation.  We establish bounds on these events that are small enough that a
simple application of the union bound -- taking the union over all variables, bounds and starting points $j$ -- shows that the probability that of any
 event in the collection occuring is small.  Theorem~\ref{thm:main} follows.

We stress that the introduction of the stopping time $ T_{Z,j} $ allows us to assume that $ Z$ is in the
critical interval when we are establishing the martingale condition for $Z$. (Of course the other
random variables are not so constrained.)
The reason that we focus our attention on these critical intervals is the fact that the expected one-step
changes in the variables we consider have self-correcting terms.  These terms introduce a drift back toward
the expected trajectory when $Z$ is far from the expected trajectory.  By restricting our attention to the critical
intervals we make full use of these terms.  See \cite{nick} and
\cite{mike} for early applications of
this self-correcting phenomenon in applications of the differential equations method for
proving dynamic concentration.  As we noted above, the critical interval method we use here was introduced in
\cite{bfl}.

We close this preamble with some notation conventions and a lemma
that we use below.  For an arbitrary random variable $Z$ we define
\[ \Delta Z(i) = Z(i+1) - Z(i). \]
We let $ \mc{F}_i $ be the filtration of the probability space
given by the first $i$ edges chosen by the random greedy matching process.
\begin{lemma}
\label{lem:pat}
Suppose $ (x_i)_{i\in I} $ and $ (y_i)_{i \in I}$ are real numbers such that
$|x _i - x| \le \delta$ and $|y_i-y| < \epsilon$
for all $i \in I$.  Then we have
\[ \left| \sum_{i \in I} x_i y_i -  \frac{1}{|I|} \left( \sum_{i \in I} x_i \right)
\left( \sum_{i \in I} y_i \right) \right|  \le  2 |I| \delta \epsilon \]
\end{lemma}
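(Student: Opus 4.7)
The plan is to reduce the inequality to an identity in the deviations from the reference values $x$ and $y$. First I would substitute $x_i = x + \alpha_i$ and $y_i = y + \beta_i$, where by hypothesis $|\alpha_i| \le \delta$ and $|\beta_i| < \epsilon$ for every $i \in I$. The hope is that this change of variables will make the quantity on the left-hand side become a centered covariance-type expression in the $\alpha_i$ and $\beta_i$, for which a quadratic-in-$\delta\epsilon$ bound is natural.

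Next I would expand both $\sum_i x_i y_i$ and $\frac{1}{|I|}\bigl(\sum_i x_i\bigr)\bigl(\sum_i y_i\bigr)$ term by term. In the first expansion one gets $|I|xy + x\sum_i \beta_i + y\sum_i \alpha_i + \sum_i \alpha_i\beta_i$. In the second expansion, after distributing the product of sums and dividing by $|I|$, the same three leading terms appear, together with a remainder $\frac{1}{|I|}\bigl(\sum_i \alpha_i\bigr)\bigl(\sum_i \beta_i\bigr)$. The constant term and the two linear cross-terms therefore cancel upon subtraction, leaving
\[
\sum_{i \in I} x_i y_i - \frac{1}{|I|}\Bigl(\sum_{i \in I} x_i\Bigr)\Bigl(\sum_{i \in I} y_i\Bigr)
= \sum_{i \in I} \alpha_i\beta_i - \frac{1}{|I|}\Bigl(\sum_{i \in I}\alpha_i\Bigr)\Bigl(\sum_{i \in I}\beta_i\Bigr).
\]

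Finally, I would bound each of the two remaining terms separately by $|I|\delta\epsilon$ using the triangle inequality and the pointwise bounds: the first is at most $\sum_i |\alpha_i||\beta_i| \le |I|\delta\epsilon$, and the second is at most $\frac{1}{|I|}\cdot |I|\delta \cdot |I|\epsilon = |I|\delta\epsilon$. One last application of the triangle inequality then yields the stated bound $2|I|\delta\epsilon$.

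There is no real obstacle here; the lemma is a purely elementary covariance-style estimate. The only point worth flagging is the cancellation of the linear-in-$\alpha,\beta$ terms, which is precisely what forces the estimate to scale like $\delta\epsilon$ rather than like $\delta + \epsilon$, and which is what makes the bound useful in the differential-equations-style calculations later in the paper.
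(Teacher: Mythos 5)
Your proof is correct and follows essentially the same route as the paper's: after centering, the constant and linear terms cancel, leaving the covariance-style quantity $\sum_i (x_i-x)(y_i-y) - |I|\bigl(\tfrac{1}{|I|}\sum_i x_i - x\bigr)\bigl(\tfrac{1}{|I|}\sum_i y_i - y\bigr)$, and each of the two pieces is bounded by $|I|\delta\epsilon$ via the triangle inequality. The only cosmetic difference is that the paper begins by bounding $\sum_i(x_i-x)(y_i-y)$ and then rearranges, whereas you first derive the clean identity and then bound both pieces, but the underlying algebra and estimates are identical.
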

\begin{proof}
%We consider the function $$z= \displaystyle \sum_{i\in I} x_i y_i$$ and suppose we know that

%$$x_i \in x \pm \delta, y_i \in y \pm \epsilon$$ for some $x > \delta>0$ ,and  $y> \epsilon > 0$.

The triangle inequality gives
$$\displaystyle \left| \sum_{i \in I} (x_i - x)(y_i - y) \right| \leq |I|\delta\epsilon.$$
Rearranging this inequality gives
\begin{equation*}
\begin{split}
\sum_{i \in I} x_i y_i  & =  x\sum_{i \in I}y_i + y \sum_{i \in I} x_i
- |I|xy \pm |I|\delta \epsilon \\
& = \frac{1}{|I|} \left( \sum_{i \in I} x_i \right)
\left( \sum_{i \in I} y_i \right)  - |I| \left(\frac{1}{ |I|} \sum_{i \in I} x_i  - x\right)
\left(\frac{1}{ |I|} \sum_{i \in I} y_i  - y\right) \pm |I|\delta \epsilon.
\end{split}
\end{equation*}
\end{proof}

%Note that imposing the constraints $x_i=y_i$ in the special case $x=y, \delta=\epsilon$  still gives a good upper bound on the sum of squares $\sum_{i \in I} x_i^2$.

%However in that case, the lower bound is not as good as Cauchy-Shwartz, which gives

%$$\displaystyle z \geq \frac{\left(\sum_{i \in I} x_i \right)^2}{|I|}$$

\subsection{Vertex degrees}

Let $v$ be a fixed vertex.  As usual in applications of the
differential equations method for establishing dynamic concentration,
we begin with the expected one-step change in $ d_v $ (i.e. we begin
with the trend hypothesis).  We have
\begin{equation}
E\left[ \Delta d_v(i)| \mathcal{F}_i \right] = \displaystyle
- \frac{1}{Q} \sum_{E \in \mc{H}(i): v \in E } \sum_{u \in E\setminus\{ v\}} d_u(i) \pm
d_v(i) \binom{r}{2} \frac{L}{Q},   
\label{eq:trendd}
\end{equation}
where $ {\mathcal F}_i $ is the filtration defined by the 
random greedy matching process.  We note that (\ref{eq:trendd}) 
does not take into account the contribution to the expected 
change in $ d_v$ that comes from the selection of an edge 
that contains $v$ itself.  Of course, this event causes a rather
dramatic change in $d_v$, which could complicate our analysis.  
Furthermore, we are no 
longer interested in $d_v$ after $v$ leaves $V(i)$.  
This is handled formally by 
setting $ d_v(i+1) = d_v(i) $ if $ v \not\in V(i+1) $, and 
(\ref{eq:trendd}) takes this convention into account.

We begin with the upper bound on $ d_v$.  Our critical interval is
$$ [ Dp^{r-1} + e_d - f_d, Dp^{r-1} + e_d], $$  
where
%\begin{align*}
\[ f_d =  \sqrt{ 6r LD \log N} \ \ \  \ \ \ \text{ and } \ \ \ \ \ \ 
e_d  = f_d  \left(1 - r \log p \right). \]
%\end{align*}
Note that $f_d$ does not change in time and 
that $ e_d$ is increasing.
%The 
%function $e_d$ is define above
%and the function $ f_d$ will be determined below.  

For each step $j$ of the process
we define the sequence of random variables
\begin{displaymath}
\mc{D} d^+_{v,j}(i) := d_v(i) - Dp^{r -1} - e_d  \ \ \ \ \ \text{ for } i \ge j
\end{displaymath}
with the stopping time $ T_{v,j} $ defined to be the
minimum of $T$ and the smallest index $i \geq j$ such that $d_v(i)$ is
not in the critical interval or $ v \not\in V(i)$.  Note that if $ d_v(j) $ is not in the
critical interval then we simply have $ T_{v,j}=j$.
We prove dynamic concentration by considering
the sequence of random variables $\mc{D}d^+_{v,j}(j), \ldots, \mc{D} d^+_{v,j}(T_{v,j})$.
We chose $f_d$ and $e_d$ (with foresight) so 
that this sequence is a supermartingale
with respect to the natural filtration $\mathcal{F}_i$. For $j \leq i < T_{v,j}$ we have
\begin{equation*}
\begin{split}
E\left[ \Delta  \mc{D}d^+_{v,j}| \mathcal{F}_i \right] &
\leq - \frac{1}{Q} \sum_{E \in \mc{H}(i): v\in E} \sum_{u \in E \setminus \{v\}} d_u(i)
+ \frac{D r (r-1)}{N} p^{r - 2} - \frac{1}{N} e_d '\\
& \hskip2cm + O\left(\frac{L d_v}{Q}+\frac{D}{N^2} p^{r-3} + \frac{1}{N^2}e_d ''\right)\\
& \leq -\frac{ \left(Dp^{r-1} + e_d - f_d \right) \left( r-1 \right) \left(Dp^{r-1} - e_d \right) }{ NDp^r/r +e_q} + \frac{D r (r-1)}{N} p^{r - 2} \\
& \hskip2cm  - \frac{1}{N} e_d ' + O\left(\frac{L d_v}{Q}+\frac{D}{N^2} p^{r-3} + \frac{1}{N^2}e_d ''\right)\\
& \leq \frac{r(r-1)}{Np} f_d - \frac{1}{N} e_d ' \\
& \hskip2cm + O\left(\frac{(e_d - f_d) e_d}{NDp^r} + \frac{e_q}{N^2p^2} +\frac{L d_v}{Q}+\frac{D}{N^2} p^{r-3} + \frac{1}{N^2}e_d ''\right)
%\\
%& \leq \frac{r(r-1)}{Np} f_d - \frac{1}{N} e_d ' \\
%& \hskip2cm + \frac{ e_d}{ pN} \cdot O\left(\frac{e_d p^{1-r}}{D} + \frac{e_q}{e_dNp}
%+\frac{L}{e_d}+\frac{D}{N e_d } p^{r-2} + \frac{1}{p N \log N}\right)\\
\end{split}
\end{equation*}
Note that we use the assumption that $ d_v(i) $ lies in the critical interval.
Also note that in order to get the desired supermartingale condition it is necessary
to choose $e_d$ and $f_d$ so that
\begin{equation}
\label{eq:var1}
e_d' >  \frac{r(r-1)}{p} f_d.
\end{equation}
(Of course, this equation plays a central in our choice of the functions $f_d$ and $e_d$.)
%Note that we have the supermartingale condition as (\ref{eq:var1}) 
%is satisfied by $e_d$ and $f_d$, and, 

For the given error functions $e_d, e_q$, we have
\begin{align}
& \frac{(e_d - f_d) e_d}{NDp^r} + \frac{e_q}{N^2p^2} +\frac{L d_v}{Q}+\frac{D}{N^2} p^{r-3} + \frac{1}{N^2}e_d '' \nonumber  \\
& \hskip1cm
\le \frac{ e_d}{ N p} \cdot O\left(\frac{e_d p^{1-r}}{D} + \frac{e_q}{e_dNp} +\frac{L}{e_d}+\frac{D}{N e_d } p^{r-2} + \frac{1}{N p }\right) \label{eq:errors} \\
& \hskip1cm
\le \frac{ e_d}{ N p} \cdot O\left(\frac{ \sqrt{L}( \log N)^{3/2} p^{1-r}}{\sqrt{D}} \right)
+ \frac{ e_d}{ N p} \cdot o\left( \frac{\sqrt{L}}{\sqrt{D}} \nonumber
+ \frac{ \sqrt{D}}{N \sqrt{L} } + \frac{1}{ \sqrt{N}} \right).
\end{align}
(We note that these estimates make repeated use of the simple inequality $ D < NL$.)
By assuming that $p$ is a sufficiently large constant times
\[ \left(\frac{L}{D} \right)^{\frac{1}{2(r-1)}} \log^{\frac{5}{2(r-1)}} N \]
we see that the expression in (\ref{eq:errors}) can be made smaller than any constant
times $ e_d / ( Np \log N) $.  As the error functions $f_d$ and $e_d$ 
satisfy (\ref{eq:var1}), the supermartingale condition is satisfied.

We use a supermartingale inequality to bound the probability that the random variable $ \mc{D} d^+_{v,j}(T_{v,j})$ is positive. We use the following Lemma 
(see \cite{r3t} for a proof).
\begin{lemma} Let $X(i)$ be a supermartingale, such that  $-\Theta \leq \Delta X(i) \leq \theta$ for all $i$, where $\theta < \frac{\Theta}{10}$. Then for any $a < \theta m$ we have $$Pr(X(m) -X(0) > a) \leq \exp \left(- \frac{a^2}{3 \theta \Theta m}\right).$$ \end{lemma}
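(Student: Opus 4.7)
The plan is to use the exponential-moment (Bernstein--Freedman) method, exploiting the asymmetry $\theta\ll\Theta$ so that the quadratic-variation proxy that appears comes out to $\theta\Theta$ rather than $\Theta^{2}$. A symmetric Azuma bound would only give an exponent of order $a^{2}/(\Theta^{2}m)$, whereas the lemma claims the much stronger $a^{2}/(\theta\Theta m)$; the whole point is to milk the asymmetric increment hypothesis.

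The first step is a one-step exponential bound. Fix $\lambda>0$ with $c:=\lambda\Theta$ bounded by a small absolute constant. From the elementary inequality $e^{z}\le 1+z+\phi(c)z^{2}$ valid for $z\le c$, where $\phi(c):=(e^{c}-1-c)/c^{2}$ is increasing, applied with $z=\lambda\Delta X(i)$ (which satisfies $z\le\lambda\theta\le c$), and then taking conditional expectations, we obtain
\[
E\bigl[e^{\lambda\Delta X(i)}\bigm|\mathcal{F}_{i}\bigr]\le 1+\lambda\,E[\Delta X(i)\mid\mathcal{F}_{i}]+\lambda^{2}\phi(c)\,E[\Delta X(i)^{2}\mid\mathcal{F}_{i}].
\]
The crucial point is the pointwise identity
\[
\Delta X(i)^{2}\le \theta\Theta+(\Theta-\theta)\bigl(-\Delta X(i)\bigr),
\]
which is just the expansion of $(\Delta X(i)+\Theta)(\theta-\Delta X(i))\ge 0$ on the range $[-\Theta,\theta]$. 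Taking expectations and using the supermartingale hypothesis $E[\Delta X(i)\mid\mathcal{F}_{i}]\le 0$, this bounds $E[\Delta X(i)^{2}\mid\mathcal{F}_{i}]$ by $\theta\Theta$ plus a non-negative multiple of $-E[\Delta X(i)\mid\mathcal{F}_{i}]$. Substituting back gives
\[
E\bigl[e^{\lambda\Delta X(i)}\bigm|\mathcal{F}_{i}\bigr]\le 1+\lambda\bigl(1-\lambda\phi(c)(\Theta-\theta)\bigr)E[\Delta X(i)\mid\mathcal{F}_{i}]+\lambda^{2}\phi(c)\theta\Theta,
\]
and provided $\lambda$ is chosen so that $\lambda\phi(c)(\Theta-\theta)\le 1$, the middle term is non-positive and we get
\[
E\bigl[e^{\lambda\Delta X(i)}\bigm|\mathcal{F}_{i}\bigr]\le \exp\bigl(\lambda^{2}\phi(c)\theta\Theta\bigr).
\]

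Iterating via the tower property shows that $\exp\bigl(\lambda(X(i)-X(0))-i\lambda^{2}\phi(c)\theta\Theta\bigr)$ is a supermartingale, so Markov's inequality yields
\[
\Pr\bigl(X(m)-X(0)>a\bigr)\le \exp\bigl(-\lambda a+m\lambda^{2}\phi(c)\theta\Theta\bigr).
\]
I would then optimize by taking $\lambda$ proportional to $a/(\theta\Theta m)$; the hypothesis $a<\theta m$ guarantees $c=\lambda\Theta$ is bounded by an absolute constant, so $\phi(c)$ is bounded as well, and the assumption $\theta<\Theta/10$ gives enough slack that the side condition $\lambda\phi(c)(\Theta-\theta)\le 1$ holds simultaneously. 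A concrete choice such as $\lambda=2a/(3\theta\Theta m)$ makes the arithmetic close out to the desired exponent $-a^{2}/(3\theta\Theta m)$.

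The only genuinely new ingredient is the asymmetric second-moment bound, which converts a strong downward drift into a \emph{free} improvement in the variance term; everything else is standard Bernstein--Freedman optimization. The main obstacle is therefore purely one of bookkeeping: the four quantities $\lambda\Theta$, $\phi(\lambda\Theta)$, the gap $1-\theta/\Theta$, and the constant $3$ in the denominator must be lined up so that all the inequalities fire together. The hypotheses $\theta<\Theta/10$ and $a<\theta m$ are precisely what supplies the necessary slack.
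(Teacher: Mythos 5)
The paper states this lemma without proof; it is invoked as a known concentration inequality (a one-sided Freedman/Bernstein-type bound appearing in the cited work on the triangle-free process and related papers), so there is no ``paper proof'' to compare against.

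Your argument is correct and is the standard way to prove such a bound. The key step you isolate — the pointwise inequality $\Delta X^{2}\le\theta\Theta+(\Theta-\theta)(-\Delta X)$, obtained from $(\Delta X+\Theta)(\theta-\Delta X)\ge 0$, and the observation that the extra drift term can be absorbed into the first-order term via the supermartingale hypothesis once $\lambda\phi(c)(\Theta-\theta)\le 1$ — is exactly right, and it is what converts a naive $\Theta^{2}$ variance proxy into the improved $\theta\Theta$. The arithmetic also closes: with $\lambda=2a/(3\theta\Theta m)$ and $a<\theta m$ one gets $c=\lambda\Theta<2/3$, so $\phi(c)\le\phi(2/3)<3/4$, and the exponent $-\lambda a+m\lambda^{2}\phi(c)\theta\Theta=\frac{a^{2}}{\theta\Theta m}\bigl(-\frac{2}{3}+\frac{4\phi(c)}{9}\bigr)\le-\frac{a^{2}}{3\theta\Theta m}$, while the side condition $\lambda\phi(c)(\Theta-\theta)\le c\,\phi(c)<1$ holds with room to spare. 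One small remark: you take $c=\lambda\Theta$ rather than the sharper $c=\lambda\theta$; this is fine since $\theta\le\Theta$ guarantees $z=\lambda\Delta X\le c$, and it is actually what makes the estimate go through with only $\theta\le\Theta$ — the stronger hypothesis $\theta<\Theta/10$ in the statement is not really needed by your argument, so you get a slightly more general result than stated.
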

\noindent
Since $d_v$ is non-increasing, $Dp^{r -1}$ is decreasing and $e_d$ is increasing, the one step change in $ \mc{D} d^+_{v,j}$ is bounded above by the one step change in $Dp^{r -1}$, which is at most $$\theta=\displaystyle \frac{D(r-1)}{N}  (1+o(1)).$$ For a lower bound on $\Delta d^+_{v,j}$, note that the one step change in $e_d$ is negligible compared to the maximum possible one step change in $d_v$, which occurs when we pick an edge containing a vertex that has pairwise-degree $L$ with $v$.  So we can set $\Theta=r L (1+o(1))$.

Now, if $d_v$ crosses the upper boundary of its critical interval at the stopping time $T$, then there is some step $j$ (with $T=T_{v,j}$) such that $$ \mc{D} d^+_{v,j}(j) \leq -f_d(t(j)) + \frac{D(r-1)}{N}  (1+o(1))$$
and $d^+_{v,j}( T_{v,j})>0$.  Applying the lemma (and noting $ D/N = o(f_d) $)
we see that the probability
of the supermartingale $d^+_{v,j}$ having such a large upward deviation has probability at most
$$\exp\left\{ -\frac{f_d^2}{3 \frac{D(r-1)}{N} (r L) (\frac{Np}{r})} (1+o(1)) \right\}.$$
As there are $O(N^2)$ such supermartingales, we would like the above expression to be $o(N^{-2})$.
Thus, it suffices to take
$$f_d = \sqrt{6 r LD \log N}.$$
Furthermore this choice also satisfies (\ref{eq:var1}).  (Note that, in fact, this condition
together with (\ref{eq:var1}) essentially determines the error functions $e_d$.)

%

%Note that this choice together with the condition $ e_d' > (r^2 f_d)/p $ used in

%determining the martingale calculation above determines $e_d$.

%

%

%$$f_d = \sqrt{ LD \log N} 6^\frac{1}{2} r^\frac{3}{2} \log p$$

%

%So long as we restrict $$p= \Omega\left( \left(\frac{L}{D} \right)^{\frac{1}{2(r-1)}} \log^{\frac{5}{2(r-1)}} N \right)$$ these choices make the sequence of variables $d^+(v,j) \ldots d^+(v,T_j)$ a supermartingale whose probability of having a large upward drift is $o(N^{-2})$.

Thus, the probability that $T$ is less than bound stated in Theorem~\ref{thm:main} due to
a violation of the upper bound on $d_v$ goes to zero as $N$ tends to infinity.

The lower bound for $d_v$ is similar.

\subsection{ Number of edges}

We again begin with the trend hypothesis.  We have
$$E\left[ \Delta Q(i)| \mathcal{F}_i \right] =  \displaystyle
-\frac{1}{Q} \sum_{A \in \mc{H}(i)} \sum_{v \in A} d_v(A)   + O(L)
=
\displaystyle
- \frac{1}{Q} \sum_{v \in V(i)} d_v^2(i)  +O(L)   $$
For $i < T$ we have
$$ \displaystyle  \sum_{v \in V(i)} d_v^2 =  \frac{ (rQ)^2}{ Np} \pm 2Npe_d^2, $$
by an application of Lemma~\ref{lem:pat}, and therefore
\[ E\left[ \Delta Q(i)| \mathcal{F}_i \right] =  - \frac{ r^2 Q}{Np} \pm \frac{ 2 Np e_d^2}{ Q} + O(L). \]

We work with the upper bound on $ Q(i) $.    Our critical interval is
$$ \left[ \frac{ND}{r} p^{r} + e_q - f_q,  \frac{ND}{r}p^{r} + e_q\right], $$
where
\[  f_q = 6r^2 N L \log N p^{2-r} \ \ \ \ \ \text{ and } \ \ \ \ \ 
e_q = 
15 f_q \left( 1 - r \log p \right)^2. \] 
Note that both $f_q$ and $e_q$ are non-decreasing in time.
For each step $j$ of the process
we define the sequence of random variables
$$ \mc{D} Q^+_j(i) := Q(i) - \frac{ND}{r} p^r - e_q $$
with the stopping time $ T_{j} $ defined to be the
minimum of $T$ and the smallest index $i \geq j$ such that $Q(i)$ is
not in the critical interval.
%Note that if $ d_v(j) $ is not in the

%critical interval then we simply have $ T_j=j$.

%We prove dynamic concentration by considering

%the sequence of random variables $d^+_{v,j}(j), \ldots, d^+_{v,j}(T_j)$.

%We chose $e_d$ (with foresight) so that this sequences is a supermartingale

%with respect to the natural filtration $\mathcal{F}_i$.

We begin by showing that $ \mc{D} Q^+_j(j), \dots, \mc{D} Q^+_j(T_j) $ is a supermartingale.
For $j \leq i < T_j$ we have
\begin{equation*}
\begin{split}
E\left[ \Delta \mc{D} Q_j^+(i)| \mathcal{F}_i \right]
& \leq \displaystyle -  \frac{ r^2 Q}{Np}  + r D p^{r-1} - \frac{1}{N} e_q '
+  \frac{ 2Np e_d^2}{ Q} + O\left( L + \frac{D}{N}p^{r-2} + \frac{1}{N^2} e_q '' \right)\\
%& \leq \displaystyle - \frac{1}{Q}  \left( \frac{r^2 Q^2}{Np} - Npf_d^2 \right)
%+ r D p^{r-1} - \frac{1}{N} f_q '\\
%& \hskip2cm + O\left(L + \frac{D}{N}p^{r-2} + \frac{1}{N^2} f_q '' \right)\\
& \leq \displaystyle  - \frac{ r^2 (e_q - f_q) }{Np} -\frac{1}{N} e_q ' +  \frac{ (2r + o(1)) p^{1-r} e_d^2}{ D } \\
& \hskip3.5cm  +
O\left(  L + \frac{D}{N}p^{r-2} + \frac{1}{N^2} e_q '' \right) \\
%& \hskip2cm + O\left(L + \frac{D}{N}p^{r-2} + \frac{1}{N^2} f_q '' \right)\\
%& \leq \displaystyle \frac{r}{D} p^{1-r}f_d ^2 - \frac{r^2}{Np}g_q - \frac{1}{N}f_q ' \\
%& + O\left(\frac{g_q ^2}{N^2 D p^{r +1}} + L + \frac{D}{N}p^{r-2} + \frac{1}{N^2} f_q '' \right)\\
\end{split}
\end{equation*}
In order to get the supermartingale condition it suffices, up to constant factors, to take $$e_q > e_d^2 N p^{2-r}/ D .$$  Note that this determines
the main terms in the choice of $ e_q$ above.  As
$f_q = 6r^2NL \log N p^{2-r}, $
we have
\[
- \frac{ r^2 (e_q - f_q) }{Np} +  \frac{ (2r + o(1)) p^{1-r} e_d^2}{ D }
\le -L p^{1-r} (\log N) (1 - r \log p)^2. \]
This clearly dominates the remaining error terms (note that $ e_q'>0$) and therefore
the sequence $\mc{D} Q^+(j), \ldots, \mc{D} Q^+(T_j)$ is a supermartingale.

Now we apply the Hoeffding-Azuma inequality to bound the
probability that the random variable $ \mc{D} Q^+(T_j)$ is positive.
The lemma we use is as follows:

\begin{lemma}Let $X_j$ be a supermartingale, with $|\Delta X_i| \leq c_i$ for all $i$. Then
$$P(X_m - X_0 \geq a) \leq \displaystyle \exp\left(-\frac{a^2}{2 \displaystyle \sum_{i\leq m} c_i^2 }\right).$$ \end{lemma}

Since $i<T$ implies bounds on degrees, we have
$$|\Delta \mc{D} Q^+| \le (1 + o(1))r e_d \le \sqrt{7r^3 LD \log N}( 1-r\log p).$$
Thus, if $Q$ crosses its upper boundary at the stopping time $T$, then there is some
step $j$ (with $T=T_j$) such that $$ \mc{D} Q^+ (j)\leq - f_q(t(j)) + O(\sqrt{LD} \log^{3/2} N)$$ and
$ \mc{D} Q^+(T_j)>0$. Applying the Hoeffding-Azuma we see that the probability of the
supermartingale $ \mc{D} Q^+$ having such a large upward deviation has probability at most
\begin{multline*}
\exp\left\{ -\frac{[(1+o(1))6r^2NL \log N p^{2-r} ]^2}{2(Np)
[ 7r^3 LD \log N(1-r\log p)^2]}  \right\} \\
\leq \exp\left\{ -(1+o(1)) \frac{18r}{7} \cdot \frac{NL}{D}  \cdot
 \frac{ p^{3-2r}}{ (1-r\log p)^2} \cdot \log N  \right\} = o(N^{-1})
\end{multline*}
where $p = p(j)$. Note that we have used $D < NL$ again and that the constants have been chosen to deal
with $p$ constant. As there are at most $O(N)$ such supermartingales, the probability
that $T$ is less than the bound stated in Theorem~\ref{thm:main} due to $Q(i)$ breaching
the upper bound tends to zero as $N$ tends to infinity.

The lower bound for $Q$ is similar.

\end{document}